\documentclass[11pt]{article}

\usepackage{amsfonts,amsmath,amsthm,amssymb}
\usepackage{bm}
\usepackage[colorlinks, citecolor=blue, urlcolor=blue]{hyperref}
\usepackage[numbers]{natbib}
\setlength{\bibsep}{0pt}
\usepackage{geometry}
\usepackage[utf8]{inputenc}
\usepackage[T1]{fontenc}    
\usepackage{mathtools}

\usepackage{enumitem}

\newtheorem{theorem}{Theorem}[section]

\newtheorem{lemma}[theorem]{Lemma}
\newtheorem{proposition}[theorem]{Proposition}

\theoremstyle{definition}
\newtheorem{definition}[theorem]{Definition}

\newtheorem{remark}[theorem]{\textbf{Remark}}
\numberwithin{equation}{section}


\begin{document}

\title{Markovian projections for It\^o semimartingales with jumps}
\author{
Martin Larsson\footnote{Department of Mathematical Sciences, Carnegie Mellon University, \texttt{larsson@cmu.edu}} \and
Shukun Long\footnote{Department of Mathematical Sciences, Carnegie Mellon University, \texttt{shukunl@andrew.cmu.edu}}
}
\maketitle

\begin{abstract}
Given a general It\^o semimartingale, its Markovian projection is an It\^o process, with Markovian differential characteristics, that matches the one-dimensional marginal laws of the original process. We construct Markovian projections for It\^o semimartingales with jumps, whose flows of one-dimensional marginal laws are solutions to non-local Fokker--Planck--Kolmogorov equations (FPKEs). As an application, we show how Markovian projections appear in building calibrated diffusion/jump models with both local and stochastic features.
\end{abstract}

\section{Introduction}
The Markovian projection arises in the problem where we want to mimic the one-dimensional marginal laws of an It\^o process using another one with simpler dynamics. To be more specific, suppose we are given an It\^o process $X$ whose characteristics are general stochastic processes. Our goal is to find another It\^o process $\widehat{X}$ solving a Markovian SDE, i.e.\ the coefficients are functions of time and the process itself, such that the law of $\widehat{X}_t$ agrees with the law of $X_t$ for every $t \geq 0$. The process $\widehat{X}$ is called a Markovian projection of $X$.

The terminology \emph{Markovian projection} has no standard definition, but is widely used in literature. Some authors require the mimicking process $\widehat{X}$ to be a true Markov process, while others (including our paper) only require $\widehat{X}$ to solve a Markovian SDE and we know the Markov property is not guaranteed in general. Also, some authors prefer to use alternative terminologies like ``mimicking process'' or ``mimicking theorem'' when referring to the same problem.

The idea of Markovian projections for It\^o processes originated from Gy\"{o}ngy \cite{MR0833267}, which was inspired by Krylov \cite{MR0808203}. In \cite{MR0833267} Markovian projections were constructed for continuous It\^o semimartingales, under some boundedness and non-degeneracy conditions on the coefficients. Brunick and Shreve \cite{MR3098443} extended the results of \cite{MR0833267} by relaxing the assumptions therein to an integrability condition. They also proved mimicking theorems for functionals of sample paths such as running average and running maximum, using techniques of updating functions. Bentata and Cont \cite{bentata2009mimicking} studied Markovian projections for It\^o semimartingales with jumps. Their proof was based on a uniqueness result of the FPKE, and the mimicking process they constructed was Markov. To get such results, they imposed relatively strong assumptions on the coefficients such as continuity, which is not always easy to check in practice. See also K\"{o}pfer and R\"{u}schendorf \cite{MR4594213} for work closely related to \cite{bentata2009mimicking}.

In this paper, we construct Markovian projections for c\`adl\`ag It\^o semimartingales. Our results holds under reasonable integrability and growth conditions. In the context of mimicking marginal laws of the process itself, this paper complements Brunick and Shreve \cite{MR3098443} by allowing the diffusion process to have jumps. On the other hand, we work under different settings from Bentata and Cont \cite{bentata2009mimicking}. Our assumptions are weaker in most cases, at the cost of not guaranteeing the uniqueness and Markov property of the mimicking process. One of our main tools is the superposition principle established by R\"{o}ckner, Xie and Zhang \cite{MR4168386}, which constitutes a bridge from weak solutions of non-local FPKEs to martingale solutions for the associated non-local operator. The idea of using a superposition principle to prove a mimicking theorem seems to have been first used in Lacker, Shkolnikov and Zhang \cite{MR4612111}.

This paper is organized as follows. In Section~\ref{sec2} we gather all the required preliminaries. In Section~\ref{sec3} we state and prove our main result (Theorem~\ref{thm_mp}). In Section~\ref{sec4} we provide several examples to illustrate how the theorem can be applied.

Throughout this paper, we let $(\Omega, \mathcal{F}, (\mathcal{F}_t)_{t \geq 0}, \mathbb{P})$ be a filtered probability space satisfying the usual conditions, and we use the following notation:
\begin{itemize}[nosep]
\item $\mathbb{R}_+ = [0, \infty)$.

\item $\mathbb{S}_+^d$ is the set of symmetric positive semi-definite $d \times d$ real matrices.

\item $C_0(\mathbb{R}^d)$ (resp.\ $C_c(\mathbb{R}^d)$) is the set of continuous functions on $\mathbb{R}^d$ which vanish at infinity (resp.\ have compact support).

\item $\mu(f) = \int f \,d\mu$, for $\mu$ a measure and $f$ a measurable function on some space such that the integral is well-defined.

\item $\mathcal{P}(X)$ is the space of Borel probability measures on a Polish space $X$, endowed with the topology of weak convergence.
\end{itemize}

\section{Prerequisites and Preliminary Results}\label{sec2}
This section serves as a preparation for stating and proving our main results. In the sequel, we review some standard notions and present two key lemmas.

\subsection{Transition Kernel}
In the study of the characteristics of It\^o semimartingales with jumps (see Section~\ref{sec_dc}) and other fields like Markov processes, the notion of transition kernels comes into play. In this subsection, we recall some of the standard definitions and fix some terminologies for our later use.

\begin{definition}[Transition kernel]
Let $(X, \mathcal{A})$, $(Y, \mathcal{B})$ be two measurable spaces. We call $\kappa: X \times \mathcal{B} \to [0, \infty]$ a \emph{transition kernel} from $(X, \mathcal{A})$ to $(Y, \mathcal{B})$ if:
\begin{enumerate}[label=(\roman*), nosep]
\item for each $x \in X$, the map $\kappa(x, \cdot): \mathcal{B} \to [0, \infty]$ is a measure,

\item for each $B \in \mathcal{B}$, the map $\kappa(\cdot, B): X \to [0, \infty]$ is a measurable function.
\end{enumerate}
\end{definition}

We often say $\kappa$ is a transition kernel from $X$ to $Y$ if there is no ambiguity on the $\sigma$-algebras $\mathcal{A}$, $\mathcal{B}$. Unless otherwise specified, on a topological space we consider its Borel $\sigma$-algebra; on a product space we consider its product $\sigma$-algebra. In particular, when working with stochastic processes, we assume by default that $\Omega \times \mathbb{R}_+$ is equipped with the $\sigma$-algebra $\mathcal{F} \times \mathcal{B}(\mathbb{R}_+)$. If we require stronger measurability, e.g.\ with respect to the predictable $\sigma$-algebra, we will explicitly say so.

When $X = \Omega$, we also call $\kappa$ a \emph{random measure}. We often use the notation $\kappa(dy)$, omitting its dependency on $\omega \in \Omega$. When $X = \Omega \times \mathbb{R}_+$, for fixed $t \geq 0$ the map
\begin{equation*}
	\Omega \times \mathcal{B} \ni (\omega, B) \mapsto \kappa(\omega, t, B) \in [0, \infty]
\end{equation*}
is a random measure, and we denote it by $\kappa_t(dy)$.

The following terminologies will be convenient for our later use.

\begin{definition}
Let $(X, \mathcal{A})$, $(Y, \mathcal{B})$ be two measurable spaces, and $\kappa$ be a transition kernel from $X$ to $Y$.
\begin{enumerate}[label=(\roman*), nosep]
\item We say $\kappa$ is a \emph{finite} transition kernel if for each $x \in X$, $\kappa(x, dy)$ is a finite measure on $Y$.

\item When $Y = \mathbb{R}^d$, we say $\kappa$ is a \emph{L\'evy} transition kernel if for each $x \in X$, $\kappa(x, dy)$ is a L\'evy measure on $\mathbb{R}^d$, i.e.\
\begin{equation*}
	\kappa(x, \{0\}) = 0
	\quad\text{and}\quad
	\int_{\mathbb{R}^d} 1 \land |y|^2 \,\kappa(x, dy) < \infty.
\end{equation*}

\item When $X = \Omega \times \mathbb{R}_+$ and $\mathcal{A}$ is the predictable $\sigma$-algebra, we say $\kappa$ is a \emph{predictable} transition kernel. That is, for each $B \in \mathcal{B}$, $(\omega, t) \mapsto \kappa(\omega, t, B)$ is a predictable process.
\end{enumerate}
\end{definition}

\subsection{Key Lemmas}
Now we present two lemmas which are crucial in proving our main results. These lemmas are also of interest on their own. The first lemma was proved by Brunick and Shreve \cite{MR3098443}, which we quote below.

\begin{lemma}[cf.\ \cite{MR3098443}, Proposition 5.1]\label{lem1}
Let $X$ be an $\mathbb{R}^d$-valued measurable process, and $\alpha$ be a $C$-valued measurable process, where $C \subseteq \mathbb{R}^n$ is a closed convex set, satisfying
\begin{equation*}
	\mathbb{E}\bigg[\int_0^t |\alpha_s| \,ds\biggr] < \infty,\quad
	\forall\, t > 0.
\end{equation*}
Then, there exists a measurable function $a: \mathbb{R}_+ \times \mathbb{R}^d \to C$ such that for Lebesgue-a.e.\ $t \geq 0$,
\begin{equation*}
	a(t, X_t) = \mathbb{E}[\alpha_t \,|\, X_t].
\end{equation*}
\end{lemma}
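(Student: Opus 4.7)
The plan is to realize the conditional expectation as a Radon--Nikodym derivative on the graph of $X$. For each $T > 0$, I would introduce the \emph{occupation measure}
\begin{equation*}
	\nu_T(A) \;=\; \E\!\left[\int_0^T \I_A(t, X_t)\,dt\right],
	\qquad A \in \Bcal(\R_+ \times \R^d),
\end{equation*}
which is a finite Borel measure on $[0,T] \times \R^d$, and, componentwise for $i = 1, \dots, n$, the signed measure
\begin{equation*}
	\nu_T^i(A) \;=\; \E\!\left[\int_0^T \I_A(t, X_t)\,\alpha_t^i\,dt\right].
\end{equation*}
The integrability hypothesis on $\alpha$ ensures each $|\nu_T^i|$ is finite and dominated by $\E[\int_0^T |\alpha_s|\,ds] < \infty$; clearly $|\nu_T^i| \ll \nu_T$, so the Radon--Nikodym theorem produces Borel functions $a_T^i: [0,T] \times \R^d \to \R$ with $\nu_T^i = a_T^i \, \nu_T$. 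A consistency check (the derivatives agree $\nu_T$-a.e.\ on $[0, T] \times \R^d$ as $T$ varies) lets me patch these into a single Borel map $\widetilde a: \R_+ \times \R^d \to \R^n$.

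Next I would verify the conditioning identity. By construction, for any bounded Borel $\phi$ on $\R_+ \times \R^d$,
\begin{equation*}
	\E\!\left[\int_0^T \phi(t, X_t)\,\alpha_t\,dt\right]
	\;=\; \E\!\left[\int_0^T \phi(t, X_t)\,\widetilde a(t, X_t)\,dt\right].
\end{equation*}
Specializing $\phi(t, x) = \psi(t) \chi(x)$ and invoking Fubini on the product measure $dt \otimes \P$, the definition of conditional expectation gives $\widetilde a(t, X_t) = \E[\alpha_t \mid X_t]$ a.s.\ for Lebesgue-a.e.\ $t \geq 0$.

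The main obstacle is ensuring $C$-valuedness, since a raw Radon--Nikodym derivative need not land in $C$. Here I would exploit that $C$ is a closed convex subset of $\R^n$, hence the intersection of countably many closed half-spaces $H_k = \{y : \langle u_k, y\rangle \leq c_k\}$ (take, e.g., supporting half-spaces at a countable dense subset of $\partial C$, or use the fact that $C^c$ is a countable union of open balls disjoint from $C$, each separable from $C$ by Hahn--Banach). For each $k$, applying the identity above with $\alpha$ replaced by the scalar process $\langle u_k, \alpha\rangle - c_k \leq 0$ shows $\langle u_k, \widetilde a(t, X_t)\rangle \leq c_k$ a.s.\ for a.e.\ $t$, i.e.\ $\widetilde a \in H_k$ $\nu$-a.e. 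Intersecting over $k$, $\widetilde a \in C$ outside a $\nu$-null set $N$. Finally I would redefine $a$ to equal $\widetilde a$ off $N$ and to equal the Borel projection $\pi_C$ onto $C$ (continuous since $C$ is closed convex, hence Borel measurable) on $N$; this $a$ is Borel, takes values in $C$ everywhere, and still satisfies $a(t, X_t) = \widetilde a(t, X_t)$ a.s.\ for a.e.\ $t$, completing the claim.
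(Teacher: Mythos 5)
Your construction is essentially the paper's own (cf.\ Remark~\ref{rem_lem1} and \cite{MR3098443}): you realize $\mathbb{E}[\alpha_t \,|\, X_t]$ as a Radon--Nikodym derivative of the vector-valued occupation measure with respect to the scalar occupation measure, the only cosmetic difference being that you localize to $[0,T]$ and patch, rather than working directly with the $\sigma$-finite measures $\mu$, $\nu$ on $\mathbb{R}_+ \times \mathbb{R}^d$ as in \eqref{def_mu}. Your explicit handling of $C$-valuedness (countably many separating half-spaces, then modification on a null set via the projection onto $C$) and the routine countable-class argument implicit in your verification of the conditioning identity correctly supply the details the paper delegates to \cite{MR3098443}, so the argument is sound and follows the same route.
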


\begin{remark}\label{rem_lem1}
For each fixed $t \geq 0$, we all know $\mathbb{E}[\alpha_t \,|\, X_t]$ is some measurable function of $X_t$. However, the joint measurability of $a$ is less obvious, and this is the key point of Lemma~\ref{lem1}. The proof of this lemma is constructive. Indeed, we define the $\sigma$-finite measure $\mu$ and the $\sigma$-finite vector-valued measure $\nu$ via
\begin{equation}\label{def_mu}
\begin{split}
	\mu(A) &\coloneqq \mathbb{E}\biggl[\int_0^\infty \bm{1}_A(s, X_s) \,ds \biggr],\quad
	A \in \mathcal{B}(\mathbb{R}_+ \times \mathbb{R}^d),\\
	\nu(A) &\coloneqq \mathbb{E}\biggl[\int_0^\infty \alpha_s \bm{1}_A(s, X_s) \,ds \biggr],\quad
	A \in \mathcal{B}(\mathbb{R}_+ \times \mathbb{R}^d).
\end{split}
\end{equation}
Clearly, we have $\nu \ll \mu$. Then, we can choose function $a$ to be any version of the Radon--Nikodym derivative $\frac{d\nu}{d\mu}$. For more details, see the proof in \cite{MR3098443}.
\end{remark}

The second lemma is novel, and it is an analogue of Lemma~\ref{lem1} in terms of transition kernels. We will construct a kernel $k(t, x, d\xi)$ from $\mathbb{R}_+ \times \mathbb{R}^d$ to $\mathbb{R}^d$ satisfying an identity involving conditional expectations. The key point is to find a family of measures indexed by $(t, x)$, and simultaneously preserve the joint measurability in $(t, x)$.

\begin{lemma}\label{lem2}
Let $X$ be an $\mathbb{R}^d$-valued measurable process, and $\kappa$ be a transition kernel from $\Omega \times \mathbb{R}_+$ to $\mathbb{R}^d$ satisfying
\begin{equation}\label{lem2_asm}
	\mathbb{E}\bigg[\int_0^t \kappa_s(\mathbb{R}^d) \,ds\biggr] < \infty,\quad
	\forall\, t > 0.
\end{equation}
Then, there exists a finite transition kernel $k$ from $\mathbb{R}_+ \times \mathbb{R}^d$ to $\mathbb{R}^d$ such that for Lebesgue-a.e.\ $t \geq 0$,
\begin{equation}\label{lem2_rslt}
	k(t, X_t, A) = \mathbb{E}[\kappa_t(A) \,|\, X_t],\quad
	\forall\, A \in \mathcal{B}(\mathbb{R}^d).
\end{equation}
\end{lemma}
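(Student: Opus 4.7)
The plan is to construct $k$ via a disintegration argument. Introduce the $\sigma$-finite Borel measure $\eta$ on $\mathbb{R}_+ \times \mathbb{R}^d \times \mathbb{R}^d$ defined by
$$
\eta(C) \coloneqq \mathbb{E}\biggl[\int_0^\infty \int_{\mathbb{R}^d} \bm{1}_C(t, X_t, \xi)\, \kappa_t(d\xi)\, dt\biggr],
$$
whose marginal $\bar\eta$ on $\mathbb{R}_+ \times \mathbb{R}^d$ is $\sigma$-finite by \eqref{lem2_asm} and satisfies $\bar\eta \ll \mu$, where $\mu$ is the reference measure from \eqref{def_mu}. Set $\lambda \coloneqq d\bar\eta/d\mu$, chosen to be jointly measurable and everywhere finite (redefining on a $\mu$-null set if needed). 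Since $\mathbb{R}^d$ is Polish and $\bar\eta$ is $\sigma$-finite, the disintegration theorem provides a probability kernel $k_0$ from $\mathbb{R}_+ \times \mathbb{R}^d$ to $\mathbb{R}^d$ with $\eta(d(t,x,\xi)) = k_0(t,x,d\xi)\, \bar\eta(d(t,x))$. I then take
$$
k(t, x, A) \coloneqq \lambda(t, x)\, k_0(t, x, A),
$$
which is a finite transition kernel satisfying the factorization $\eta = \mu \otimes k$ by construction.

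To verify \eqref{lem2_rslt}, fix $A \in \mathcal{B}(\mathbb{R}^d)$ and apply Fubini to $\eta = \mu \otimes k$ against a test function of the form $g(x)\, \bm{1}_A(\xi)\, \bm{1}_{[0,T]}(t)$ with $g$ bounded measurable and $T > 0$, to get
$$
\int_0^T \mathbb{E}[g(X_t)\, \kappa_t(A)]\, dt = \int_0^T \mathbb{E}[g(X_t)\, k(t, X_t, A)]\, dt.
$$
Letting $T$ vary and $g$ range over a countable separating family, I deduce $k(t, X_t, A) = \mathbb{E}[\kappa_t(A) \mid X_t]$ almost surely for Lebesgue-a.e.\ $t \geq 0$, for each individual $A$. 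To upgrade this to the ``for all $A$'' statement, fix a countable $\pi$-system $\mathcal{S}$ generating $\mathcal{B}(\mathbb{R}^d)$ (e.g., rational boxes) and intersect the countably many exceptional null sets: on a full-Lebesgue-measure set of $t$, the identity holds a.s.\ for every $A \in \mathcal{S}$. Since $A \mapsto k(t, X_t, A)$ is a finite measure by construction and, via Polishness of $\mathbb{R}^d$, a regular version of $A \mapsto \mathbb{E}[\kappa_t(A) \mid X_t]$ is also a finite measure, Dynkin's $\pi$-$\lambda$ theorem extends the equality from $\mathcal{S}$ to all of $\mathcal{B}(\mathbb{R}^d)$.

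The main obstacle is the step that goes genuinely beyond Lemma~\ref{lem1}: producing $k_0(t, x, d\xi)$ that is \emph{jointly measurable} in $(t, x)$ and simultaneously a bona fide measure in $\xi$, rather than merely a $\mathcal{B}(\mathbb{R}^d)$-indexed family of Radon--Nikodym densities obtained set-by-set. The Polish structure of $\mathbb{R}^d$ is exactly what allows the disintegration theorem to deliver such a kernel in one stroke. The subsequent Dynkin argument is routine but essential for promoting the fibrewise-in-$A$ identity into the simultaneous one required by \eqref{lem2_rslt}.
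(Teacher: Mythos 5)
Your proof is correct, but it takes a genuinely different route from the paper. The paper builds the kernel by hand: it applies Lemma~\ref{lem1} to each $\varphi$ in a countable $\mathbb{Q}$-vector space $\mathcal{L}$ dense in $C_0(\mathbb{R}^d)\oplus\mathbb{R}$, modifies the resulting functions on a single $\mu$-null set so that $(t,x)\mapsto L_{t,x}$ becomes a positive $\mathbb{Q}$-linear functional everywhere, extends by density, and then invokes the Riesz--Markov--Kakutani representation theorem to produce the finite measures $k(t,x,d\xi)$; the identity \eqref{lem2_rslt} is then essentially built in test-function-by-test-function and upgraded to all Borel $A$ by monotone convergence and Dynkin. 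You instead form the occupation-type measure $\eta$ on $\mathbb{R}_+\times\mathbb{R}^d\times\mathbb{R}^d$, disintegrate it over its $(t,x)$-marginal $\bar\eta$ (legitimate here because $\bar\eta([0,n]\times\mathbb{R}^d)=\mathbb{E}[\int_0^n\kappa_s(\mathbb{R}^d)\,ds]<\infty$, so both $\eta$ and its marginal are $\sigma$-finite and $\mathbb{R}^d$ is standard Borel), and multiply by the Radon--Nikodym density $\lambda=d\bar\eta/d\mu$ with $\mu$ from \eqref{def_mu}; the mimicking identity is then verified a posteriori from the factorization $\eta=\mu\otimes k$ via Fubini, a countable separating family, and Dynkin. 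Your route is shorter and outsources the delicate joint-measurability issue to the disintegration theorem, at the price of invoking that heavier black box (and, in the last step, the existence of a regular version of $A\mapsto\mathbb{E}[\kappa_t(A)\,|\,X_t]$, which can be avoided by running the $\lambda$-system argument directly on the class $\{A:\ k(t,X_t,A)=\mathbb{E}[\kappa_t(A)\,|\,X_t]\ \text{a.s.}\}$, using that $\mathbb{E}[\kappa_t(\mathbb{R}^d)]<\infty$ for a.e.\ $t$ and conditional monotone convergence); the paper's route is more elementary and self-contained, resting only on Lemma~\ref{lem1} and the Riesz representation theorem. Two small points to make explicit in your write-up: include $g\equiv 1$ (or an exhausting sequence of sets, e.g.\ rational boxes $(-n,n)^d\uparrow\mathbb{R}^d$) in the separating/$\pi$-system families so that total masses are matched when applying the uniqueness theorem for finite measures, and note that redefining $\lambda$ on a $\mu$-null set does not disturb the identity $\eta=\mu\otimes k$.
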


\begin{proof}
By the integrability condition \eqref{lem2_asm}, without loss of generality, we may assume that $\kappa$ is a finite transition kernel. Otherwise, we can simply modify $\kappa(\cdot, \cdot, d\xi) \coloneqq 0$ on a $(\mathbb{P} \otimes dt)$-null set.

Our proof is based on the Riesz--Markov--Kakutani representation theorem for the dual space of $C_0(\mathbb{R}^d)$. Since nonzero constant functions do not belong to $C_0(\mathbb{R}^d)$, for technical reasons we first consider the function space
\begin{equation*}
	C_\ell(\mathbb{R}^d)
	\coloneqq C_0(\mathbb{R}^d) \oplus \mathbb{R}
	= \{f + c: f \in C_0(\mathbb{R}^d),\, c \in \mathbb{R}\}.
\end{equation*}
In other words, $C_\ell(\mathbb{R}^d)$ is the space of continuous functions on $\mathbb{R}^d$ which admit a finite limit at infinity. We endow $C_\ell(\mathbb{R}^d)$ with the supremum norm. Since $C_0(\mathbb{R}^d)$ is a separable Banach space, it is easy to check that $C_\ell(\mathbb{R}^d)$ is also a separable Banach space. Let $\mathcal{C}$ be a countable dense subset of $C_\ell(\mathbb{R}^d)$ with $1 \in \mathcal{C}$. Let $\mathcal{L}$ be the $\mathbb{Q}$-span of $\mathcal{C}$, i.e.\ the collection of all finite linear combinations of elements of $\mathcal{C}$ with rational coefficients. Clearly, $\mathcal{L}$ is a countable dense subset of $C_\ell(\mathbb{R}^d)$ with $1 \in \mathcal{L}$. Moreover, $\mathcal{L}$ is a vector space over $\mathbb{Q}$ by construction.

For each $\varphi \in \mathcal{L}$, by \eqref{lem2_asm} and Lemma~\ref{lem1}, there exists an $\mathbb{R}$-valued measurable function of $(t, x) \in \mathbb{R}_+ \times \mathbb{R}^d$, denoted by $L_{t, x}(\varphi)$, such that for Lebesgue-a.e.\ $t \geq 0$,
\begin{equation}\label{lem2_L}
	L_{t, X_t}(\varphi) = \mathbb{E}\biggl[\int_{\mathbb{R}^d} \varphi(\xi) \,\kappa_t(d\xi) \,\bigg|\, X_t\biggr].
\end{equation}
Now for fixed $(t, x) \in \mathbb{R}_+ \times \mathbb{R}^d$, we can view $\varphi \mapsto L_{t, x}(\varphi)$ as a functional on $\mathcal{L}$. We expect $L_{t, x}$ to be a positive $\mathbb{Q}$-linear functional, but this is not guaranteed unless for each $\varphi \in \mathcal{L}$ we carefully modify the function $(t, x) \mapsto L_{t, x}(\varphi)$.

As discussed in Remark~\ref{rem_lem1}, $(t, x) \mapsto L_{t, x}(\varphi)$ is defined via the Radon--Nikodym derivative $\frac{d\nu_\varphi}{d\mu}$, where $\mu$ is as defined in \eqref{def_mu} and
\begin{equation*}
	\nu_\varphi(A) \coloneqq \mathbb{E}\biggl[\int_0^\infty \bm{1}_A(s, X_s) \int_{\mathbb{R}^d} \varphi(\xi) \,\kappa_s(d\xi) \,ds \biggr],\quad
	A \in \mathcal{B}(\mathbb{R}_+ \times \mathbb{R}^d).
\end{equation*}
For $\varphi \in \mathcal{L}$ with $\varphi \geq 0$, we have that $\nu_\varphi$ is a (positive) measure, so there exists a $\mu$-null set $N_\varphi$ such that for all $(t, x) \notin N_\varphi$,
\begin{equation}\label{lem2_pos}
	L_{t, x}(\varphi) \geq 0.
\end{equation}
For $\varphi, \psi \in \mathcal{L}$ and $p, q \in \mathbb{Q}$, by the uniqueness of the Radon--Nikodym derivative, there exists a $\mu$-null set $N_{\varphi, \psi, p, q}$ such that for all $(t, x) \notin N_{\varphi, \psi, p, q}$,
\begin{equation}\label{lem2_qlin}
	L_{t, x}(p\varphi + q\psi) = pL_{t, x}(\varphi) + qL_{t, x}(\psi).
\end{equation}
We define the $\mu$-null set
\begin{equation*}
	N \coloneqq \Biggl(\bigcup_{\varphi \in \mathcal{L}, \varphi \geq 0} N_\varphi\Biggr) \cup \Biggl(\bigcup_{\varphi, \psi \in \mathcal{L}, p, q \in \mathbb{Q}} N_{\varphi, \psi, p, q}\Biggr).
\end{equation*}
For each $\varphi \in \mathcal{L}$, we modify $L_{t, x}(\varphi) \coloneqq 0$ for $(t, x) \in N$ and keep the same notation. Now by construction, \eqref{lem2_pos} holds for all $(t, x) \in \mathbb{R}_+ \times \mathbb{R}^d$, $\varphi \in \mathcal{L}$ with $\varphi \geq 0$, and \eqref{lem2_qlin} holds for all $(t, x) \in \mathbb{R}_+ \times \mathbb{R}^d$, $\varphi, \psi \in \mathcal{L}$, $p, q \in \mathbb{Q}$. Thus, for fixed $(t, x)$ we see that $L_{t, x}$ is a positive $\mathbb{Q}$-linear functional on $\mathcal{L}$. Moreover, for each $\varphi \in \mathcal{L}$, the function $(t, x) \mapsto L_{t, x}(\varphi)$ is still a version of $\frac{d\nu_\varphi}{d\mu}$, so \eqref{lem2_L} remains true for Lebesgue-a.e.\ $t \geq 0$.

The next step is to extend $L_{t, x}$ to $C_\ell(\mathbb{R}^d)$ for each fixed $(t, x) \in \mathbb{R}_+ \times \mathbb{R}^d$. Let $\varphi \in \mathcal{L}$, and take a sequence $(q_n)_{n \in \mathbb{N}} \subset \mathbb{Q}$ decreasing to $\lVert \varphi \rVert_\infty$. Note that $|\varphi| \leq q_n$ for all $n$, so it follows that
\begin{equation*}
\begin{split}
	L_{t, x}(\varphi)
	= -L_{t, x}(q_n - \varphi) + q_n L_{t, x}(1)
	\leq q_n L_{t, x}(1),\\
	L_{t, x}(\varphi)
	= L_{t, x}(q_n + \varphi) - q_n L_{t, x}(1)
	\geq -q_n L_{t, x}(1),
\end{split}
\end{equation*}
i.e.\ $|L_{t, x}(\varphi)| \leq q_n L_{t, x}(1)$. Letting $n \to \infty$, we obtain that
\begin{equation}\label{lem2_bdd}
	|L_{t, x}(\varphi)| \leq L_{t, x}(1) \lVert \varphi \rVert_\infty.
\end{equation}
By \eqref{lem2_bdd} and the density of $\mathcal{L}$ in $C_\ell(\mathbb{R}^d)$, we can uniquely extend\footnote{
	This extension is based on a standard argument. One delicate point is that $\mathcal{L}$ is a vector space over $\mathbb{Q}$ but $C_\ell(\mathbb{R}^d)$ is a vector space over $\mathbb{R}$. In the proof of the linearity of $L_{t, x}$ on $C_\ell(\mathbb{R}^d)$, we need an extra step simply by the density of $\mathbb{Q}$ in $\mathbb{R}$.
} $L_{t, x}$ to a bounded linear functional on $C_\ell(\mathbb{R}^d)$, and \eqref{lem2_bdd} holds for all $\varphi \in C_\ell(\mathbb{R}^d)$. Moreover, let $\varphi \in C_\ell(\mathbb{R}^d)$ with $\varphi \geq 0$, and take a sequence $(\varphi_n)_{n \in \mathbb{N}} \subset \mathcal{L}$ converging to $\varphi$. Let $0 < \varepsilon \in \mathbb{Q}$. Since $\varphi_n \geq -\varepsilon$ for $n$ large enough and $L_{t, x}$ is positive on $\mathcal{L}$, it follows that
\begin{equation*}
	L_{t, x}(\varphi)
	= \lim_{n \to \infty} L_{t, x}(\varphi_n)
	= \lim_{n \to \infty} L_{t, x}(\varphi_n + \varepsilon) - \varepsilon L_{t, x}(1)
	\geq -\varepsilon L_{t, x}(1).
\end{equation*}
Sending $\varepsilon \to 0$ along rational numbers, we get $L_{t, x}(\varphi) \geq 0$. Thus, $L_{t, x}$ is a positive bounded linear functional on $C_\ell(\mathbb{R}^d)$, and in particular on $C_0(\mathbb{R}^d)$. By the Riesz--Markov--Kakutani representation theorem, there exists a finite (positive) Radon measure, denoted by $k(t, x, d\xi)$, such that
\begin{equation}\label{lem2_riesz}
	L_{t, x}(\varphi)
	= \int_{\mathbb{R}^d} \varphi(\xi) \,k(t, x, d\xi),\quad
	\forall\, \varphi \in C_0(\mathbb{R}^d).
\end{equation}

We claim that $k$ is a finite transition kernel from $\mathbb{R}_+ \times \mathbb{R}^d$ to $\mathbb{R}^d$. For fixed $(t, x) \in \mathbb{R}_+ \times \mathbb{R}^d$, by construction $k(t, x, d\xi)$ is a finite measure. On the other hand, we have that $L_{t, x}(\varphi)$ is measurable in $(t, x)$ for all $\varphi \in \mathcal{L}$, thus for all $\varphi \in C_0(\mathbb{R}^d)$ by pointwise convergence. Since the indicator function of an open cube can be approximated by functions in $C_0(\mathbb{R}^d)$, from \eqref{lem2_riesz} and the monotone convergence theorem we know that $k(t, x, A)$ is measurable in $(t, x)$ for all open cubes $A$. Then by Dynkin's $\pi$-$\lambda$ theorem, measurability holds for all $A \in \mathcal{B}(\mathbb{R}^d)$. This proves our claim.

It only remains to verify \eqref{lem2_rslt} for Lebesgue-a.e.\ $t \geq 0$. The way we argue is similar to the previous paragraph. We already know that for Lebesgue-a.e.\ $t \geq 0$:
\begin{enumerate}[label=(\roman*), nosep]
	\item \eqref{lem2_L} holds for all $\varphi \in \mathcal{L}$, since $\mathcal{L}$ is countable,
	
	\item $\mathbb{E}[\kappa_t(\mathbb{R}^d)] < \infty$, due to \eqref{lem2_asm}.
\end{enumerate}
We fix such ``good'' $t$. Now for $\varphi \in C_0(\mathbb{R}^d)$, take a sequence in $\mathcal{L}$ converging to $\varphi$. By pointwise convergence on the left-hand side and $L^1$-convergence on the right-hand side of \eqref{lem2_L}, it is easy to check that \eqref{lem2_L} holds for all $\varphi \in C_0(\mathbb{R}^d)$. Then by \eqref{lem2_riesz} and the monotone convergence theorem, we know that \eqref{lem2_rslt} holds for all open cubes $A$. Finally, Dynkin's $\pi$-$\lambda$ theorem yields that \eqref{lem2_rslt} holds for all $A \in \mathcal{B}(\mathbb{R}^d)$. This finishes the proof.
\end{proof}

\begin{remark}\label{rem_lem2}
Under the framework of Lemma~\ref{lem2}, with a bit more effort, we can show that for Lebesgue-a.e.\ $t \geq 0$,
\begin{equation}\label{lem2_rslteqv}
	\int_{\mathbb{R}^d} g(X_t, \xi) \,k(t, X_t, d\xi)
	= \mathbb{E}\biggl[\int_{\mathbb{R}^d} g(X_t, \xi) \,\kappa_t(d\xi) \,\bigg|\, X_t\biggr]
\end{equation}
holds for all bounded measurable functions $g: \mathbb{R}^{2d} \to \mathbb{R}$. Indeed, \eqref{lem2_rslt} implies that \eqref{lem2_rslteqv} holds for all $g$ of the form $\bm{1}_{A_1 \times A_2}$ with $A_1, A_2 \in \mathcal{B}(\mathbb{R}^d)$. Dynkin's $\pi$-$\lambda$ theorem then tells us that \eqref{lem2_rslteqv} holds for all $g$ of the form $\bm{1}_E$ with $E \in \mathcal{B}(\mathbb{R}^{2d})$. Finally, a standard approximation argument yields the desired result.
\end{remark}

\subsection{Differential Characteristics}\label{sec_dc}
In this subsection we briefly review the concept of differential characteristics of It\^o semimartingales. For a detailed discussion, the readers can refer to \cite{MR1943877}, Chapter II.2. Note that in this paper, all semimartingales have c\`adl\`ag sample paths by convention.

\begin{definition}
We say $h: \mathbb{R}^d \to \mathbb{R}^d$ is a \emph{truncation function} if $h$ is measurable, bounded and $h(x) = x$ in a neighborhood of $0$.
\end{definition}

Now we give the definition of differential characteristics. Recall that an It\^o semimartingale is a semimartingale whose characteristics are absolutely continuous in the time variable.

\begin{definition}
Let $X = (X^i)_{1 \leq i \leq d}$ be an $\mathbb{R}^d$-valued It\^o semimartingale. The \emph{differential characteristics} of $X$ associated with a truncation function $h$ is the triplet $(\beta, \alpha, \kappa)$ consisting in:
\begin{enumerate}[label=(\roman*), nosep]
\item $\beta = (\beta^i)_{1 \leq i \leq d}$ is an $\mathbb{R}^d$-valued predictable process such that $\int_0^\cdot \beta_s \,ds$ is the predictable finite variation part of the special semimartingale
\begin{equation*}
	X(h)_t = X_t - \sum_{s \leq t} (\Delta X_s - h(\Delta X_s)).
\end{equation*}

\item $\alpha = (\alpha^{ij})_{1 \leq i, j \leq d}$ is an $\mathbb{S}_+^d$-valued predictable process such that
\begin{equation*}
	\int_0^\cdot \alpha_s^{ij} \,ds
	= \langle X^{i, c}, X^{j, c} \rangle,\quad
	1 \leq i, j \leq d,
\end{equation*}
where $X^c = (X^{i, c})_{1 \leq i \leq d}$ is the continuous local martingale part of $X$.

\item $\kappa$ is a predictable L\'evy transition kernel from $\Omega \times \mathbb{R}_+$ to $\mathbb{R}^d$ such that $\kappa_t(d\xi) dt$ is the compensator of the random measure $\mu^X$ associated to the jumps of $X$, namely
\begin{equation*}
	\mu^X(dt, d\xi) = \sum_{s > 0} \bm{1}_{\{\Delta X_s \neq 0\}} \delta_{(s, \Delta X_s)}(dt, d\xi).
\end{equation*}
\end{enumerate}
\end{definition}

\begin{remark}
We require the differential characteristics $(\beta, \alpha, \kappa)$ to be predictable. As was discussed in \cite{MR1943877}, Proposition II.2.9, we can always find such a ``good'' version. We also note that $\alpha$ and $\kappa$ do not depend on the choice of the truncation function $h$, while $\beta = \beta(h)$ does. For two truncation functions $h$, $\widetilde{h}$, the relationship between their corresponding $\beta$ is given by \cite{MR1943877}, Proposition II.2.24:
\begin{equation}\label{betah}
	\beta(h)_t - \beta(\widetilde{h})_t
	= \int_{\mathbb{R}^d} (h(\xi) - \widetilde{h}(\xi)) \,\kappa_t(d\xi).
\end{equation}
\end{remark}

Using differential characteristics, one can write an It\^o semimartingale in its canonical decomposition (\cite{MR1943877}, Theorem II.2.34):
\begin{equation*}
\begin{split}
	X_t = X_0 &+ \int_0^t \beta_s \,ds + X^c_t\\ &+ \int_0^t \int_{\mathbb{R}^d} h(\xi) \,(\mu^X(ds, d\xi) - \kappa_s(d\xi) ds) + \int_0^t \int_{\mathbb{R}^d} (\xi - h(\xi)) \,\mu^X(ds, d\xi).
\end{split}
\end{equation*}
Always perhaps, after enlarging the probability space, we may have the representation $X^c = \int_0^\cdot (\alpha_s)^{1/2} \,dB_s$ for some $d$-dimensional Brownian motion $B$, and this is what we usually see in applications. As our proof does not rely on such It\^o integrals, we stick to the more general setting.

Finally, we give a well-known property of It\^o semimartingales, which will be used in our main results. Since the proof is short, we present it below for completeness.

\begin{proposition}\label{Xt=Xt-}
Let $X$ be an It\^o semimartingale. Then, for each fixed $t \geq 0$, $\Delta X_t = 0$ $\mathbb{P}$-a.s.
\end{proposition}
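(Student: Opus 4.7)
The plan is to exploit the defining property of an It\^o semimartingale, namely that its jump compensator is absolutely continuous in time and therefore places no mass on any fixed time slice $\{t\}\times\mathbb{R}^d$.

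First, using the canonical decomposition recalled just above the proposition, observe that the drift term $\int_0^\cdot \beta_s\,ds$ is absolutely continuous (hence continuous) in $t$, and $X^c$ is by definition continuous. Consequently every jump of $X$ is recorded by the jump measure $\mu^X$: on the event $\{\Delta X_t \neq 0\}$ the sum defining $\mu^X$ contains the atom $\delta_{(t,\Delta X_t)}$, so $\mu^X(\{t\}\times\mathbb{R}^d)\geq 1$. Thus it suffices to prove that $\mu^X(\{t\}\times\mathbb{R}^d)=0$ almost surely.

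Next I would integrate $\mu^X$ against the predictable integrand $W(\omega,s,\xi) = \mathbf{1}_{\{s=t\}}$ (which is deterministic, hence trivially predictable). By the definition of the compensator $\kappa_s(d\xi)\,ds$ of $\mu^X$, the formula
\begin{equation*}
    \mathbb{E}\bigl[\mu^X(\{t\}\times\mathbb{R}^d)\bigr]
    = \mathbb{E}\biggl[\int_0^\infty \int_{\mathbb{R}^d} \mathbf{1}_{\{s=t\}} \,\kappa_s(d\xi)\,ds\biggr]
\end{equation*}
holds, and the inner integral vanishes because $\{t\}$ has Lebesgue measure zero (with the usual measure-theoretic convention $0\cdot\infty = 0$, which is relevant since $\kappa_s(\mathbb{R}^d)$ may be infinite for a L\'evy kernel).

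Combining the two steps,
\begin{equation*}
    \mathbb{P}(\Delta X_t\neq 0) \leq \mathbb{E}\bigl[\mu^X(\{t\}\times\mathbb{R}^d)\bigr] = 0,
\end{equation*}
which gives the claim. There is no serious obstacle; the only subtle point is handling the possibly infinite total mass of the L\'evy kernel, which is resolved by noting that a fixed time slice is Lebesgue-null and so contributes zero regardless of the value of $\kappa_s(\mathbb{R}^d)$.
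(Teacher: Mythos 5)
Your argument is correct and is essentially the paper's own proof: both reduce $\mathbb{P}(\Delta X_t \neq 0)$ to the expected mass of $\mu^X$ on the slice $\{t\}\times\mathbb{R}^d$ and kill it via the compensator $\kappa_s(d\xi)\,ds$, since $\{t\}$ is Lebesgue-null. Your opening step invoking the canonical decomposition is unnecessary, as $\mu^X$ is defined directly from the jumps of $X$, so $\mu^X(\{t\}\times\mathbb{R}^d) = \bm{1}_{\{\Delta X_t \neq 0\}}$ by definition; otherwise the proof matches the paper's.
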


\begin{proof}
Let $\kappa$ be the third differential characteristic of $X$, i.e.\ $\kappa_s(d\xi) ds$ is the compensator of $\mu^X$. Fix $t \geq 0$, then by the definition of compensators,
\begin{equation*}
	\mathbb{P}(\Delta X_t \neq 0)
	= \mathbb{E}\biggl[\int_{\mathbb{R}_+ \times \mathbb{R}^d} \bm{1}_{\{s=t\}} \,\mu^X(ds, d\xi)\biggr]
	= \mathbb{E}\biggl[\int_{\mathbb{R}_+} \int_{\mathbb{R}^d} \bm{1}_{\{s=t\}} \,\kappa_s(d\xi) \,ds\biggr] = 0.
\end{equation*}
\end{proof}

\section{Main Results}\label{sec3}
In this section we present our main results on Markovian projections for It\^o semimartingales with jumps. Our proof uses the superposition principle for non-local FPKEs established in \cite{MR4168386}. As a consequence, we construct Markovian projections which are solutions to martingale problems, or equivalently, weak solutions to SDEs.

First we recall the notion of martingale problem. Since we are working with semimartingales with jumps, consider the path space $\mathbb{D}(\mathbb{R}_+; \mathbb{R}^d)$ of all c\`adl\`ag functions from $\mathbb{R}_+$ to $\mathbb{R}^d$, endowed with the Skorokhod topology. Let $X$ be the canonical process, i.e.\ $X_t(\omega) = \omega(t)$ for $\omega \in \mathbb{D}(\mathbb{R}_+; \mathbb{R}^d)$ and $t \geq 0$. Let $\mathbb{F}^0$ be the natural filtration generated by $X$, and $\mathbb{F}$ be the right-continuous regularization of $\mathbb{F}^0$. Consider the non-local operator $\mathcal{L} = (\mathcal{L}_t)_{t \geq 0}$ given, for $f \in C^2(\mathbb{R}^d) \cap C_b(\mathbb{R}^d)$ and $x \in \mathbb{R}^d$, by
\begin{equation}\label{FPKO}
\begin{split}
	\mathcal{L}_t f(x) \coloneqq b(t, x) \cdot \nabla f(x) &+ \frac{1}{2} \mathrm{tr}(a(t, x) \nabla^2 f(x))\\ &+ \int_{\mathbb{R}^d} \bigl(f(x + \xi) - f(x) - \nabla f(x) \cdot \xi \bm{1}_{\{|\xi| \leq r\}}\bigr) \,k(t, x, d\xi),
\end{split}
\end{equation}
where $b: \mathbb{R}_+ \times \mathbb{R}^d \to \mathbb{R}^d$, $a: \mathbb{R}_+ \times \mathbb{R}^d \to \mathbb{S}_+^d$ are measurable functions, $k$ is a L\'evy transition kernel from $\mathbb{R}_+ \times \mathbb{R}^d$ to $\mathbb{R}^d$, and $r > 0$ is a constant.

\begin{definition}[Martingale Problem]\label{mtg_pblm}
Let $\mu_0 \in \mathcal{P}(\mathbb{R}^d)$. We call $\widehat{\mathbb{P}} \in \mathcal{P}(\mathbb{D}(\mathbb{R}_+; \mathbb{R}^d))$ a solution to the martingale problem (or a martingale solution) for $\mathcal{L}$ with initial law $\mu_0$, if
\begin{enumerate}[label=(\roman*), nosep]
\item $\widehat{\mathbb{P}} \circ (X_0)^{-1} = \mu_0$,

\item for each $f \in C_c^2(\mathbb{R}^d)$, the process
\begin{equation*}
	M^f_t \coloneqq f(X_t) - f(X_0) - \int_0^t \mathcal{L}_s f(X_s) \,ds
\end{equation*}
is well-defined and an $\mathbb{F}$-martingale under $\widehat{\mathbb{P}}$.
\end{enumerate}
\end{definition}

Under some regularity conditions, e.g.\ local boundedness of $b$, $a$, and $\int_{\mathbb{R}^d} 1 \land |\xi|^2 \,k(\cdot, \cdot, d\xi)$ (which holds under the assumptions of Theorem~\ref{thm_mp}), (ii) in Definition~\ref{mtg_pblm} implies that for each $f \in C^2(\mathbb{R}^d) \cap C_b(\mathbb{R}^d)$, $M^f$ is an $\mathbb{F}$-local martingale under $\widehat{\mathbb{P}}$. In particular, by \cite{MR1943877}, Theorem II.2.42, $X$ admits differential characteristics $b(t, X_{t-})$, $a(t, X_{t-})$ and $k(t, X_{t-}, d\xi)$, associated with the truncation function $h(x) = x \bm{1}_{\{|x| \leq r\}}$. We sometimes also say a process $\widetilde{X}$ is a solution to the martingale problem for $\mathcal{L}$. By this, we mean there exists some filtered probability space and an adapted, c\`adl\`ag process $\widetilde{X}$ on it, such that (i) and (ii) in Definition~\ref{mtg_pblm} are satisfied by $\widetilde{X}$ on its underlying probability space. We can think of it as an analogy to the notion of weak solutions of SDEs.

Now we can state our main results.

\begin{theorem}[Markovian Projection]\label{thm_mp}
Let $X$ be an $\mathbb{R}^d$-valued It\^o semimartingale with differential characteristics $(\beta, \alpha, \kappa)$ associated with the truncation function $h(x) = x \bm{1}_{\{|x| \leq r\}}$ for some $r > 0$. Suppose that $(\beta, \alpha, \kappa)$ satisfies
\begin{equation}\label{mp_asm}
	\mathbb{E}\biggl[\int_0^t \biggl(|\beta_s| + |\alpha_s| + \int_{\mathbb{R}^d} 1 \land |\xi|^2 \,\kappa_s(d\xi)\biggr) \,ds\biggr] < \infty,\quad
	\forall\, t > 0.
\end{equation}
Then, there exist measurable functions $b: \mathbb{R}_+ \times \mathbb{R}^d \to \mathbb{R}^d$, $a: \mathbb{R}_+ \times \mathbb{R}^d \to \mathbb{S}_+^d$, and a L\'evy transition kernel $k$ from $\mathbb{R}_+ \times \mathbb{R}^d$ to $\mathbb{R}^d$ such that for Lebesgue-a.e.\ $t \geq 0$,
\begin{equation}\label{mp_condexp}
	\begin{split}
		b(t, X_{t-}) &= \mathbb{E}[\beta_t \,|\, X_{t-}],\\
		a(t, X_{t-}) &= \mathbb{E}[\alpha_t \,|\, X_{t-}],\\
		\int_A 1 \land |\xi|^2 \,k(t, X_{t-}, d\xi)
		&= \mathbb{E}\biggl[\int_A 1 \land |\xi|^2 \,\kappa_t(d\xi) \,\bigg|\, X_{t-}\biggr],\quad
		\forall\, A \in \mathcal{B}(\mathbb{R}^d).
	\end{split}
\end{equation}
Furthermore, if $(b, a, k)$ satisfies the condition
\begin{equation}\label{grow_cond}
\begin{split}
	\sup_{(t, x) \in \mathbb{R}_+ \times \mathbb{R}^d} \biggl[&\frac{|b(t, x)|}{1 + |x|} + \frac{|a(t, x)|}{1 + |x|^2}\\ &+ \int_{\mathbb{R}^d} \biggl(\bm{1}_{\{|\xi| < r\}} \frac{|\xi|^2}{1 + |x|^2} + \bm{1}_{\{|\xi| \geq r\}} \log\biggl(1 + \frac{|\xi|}{1 + |x|}\biggr)\biggr) \,k(t, x, d\xi)\biggr] < \infty,
\end{split}
\end{equation}
then there exists a solution $\widehat{X}$ to the martingale problem for $\mathcal{L}$, where $\mathcal{L}$ is as defined in \eqref{FPKO}, such that for each $t \geq 0$, the law of $\widehat{X}_t$ agrees with the law of $X_t$.
\end{theorem}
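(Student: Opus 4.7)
The plan is to prove the theorem in three stages: construct the coefficients $(b,a,k)$, verify that $\mu_t := \mathrm{Law}(X_t)$ is a weak solution to the FPKE associated with $(b,a,k)$, and finally invoke the superposition principle of \cite{MR4168386} to produce $\widehat{X}$.

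First, I would construct $b$ and $a$ by direct application of Lemma~\ref{lem1} to the processes $\beta$ (valued in $\mathbb{R}^d$) and $\alpha$ (valued in the closed convex cone $\mathbb{S}_+^d$); the integrability hypothesis is supplied by \eqref{mp_asm}, and by Proposition~\ref{Xt=Xt-} conditioning on $X_t$ agrees with conditioning on $X_{t-}$ for each fixed $t$. Since $\kappa$ is only a L\'evy kernel, Lemma~\ref{lem2} does not apply to it directly, so I would introduce the auxiliary finite kernel $\widetilde{\kappa}_s(d\xi) := (1 \wedge |\xi|^2)\,\kappa_s(d\xi)$, for which \eqref{mp_asm} yields exactly the hypothesis of Lemma~\ref{lem2}. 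This produces a finite transition kernel $\widetilde{k}(t,x,d\xi)$; since the right-hand side of \eqref{lem2_rslt} vanishes on $\{0\}$, I may assume $\widetilde{k}(t,x,\{0\}) = 0$, and then I define the L\'evy transition kernel $k(t,x,d\xi) := (1 \wedge |\xi|^2)^{-1}\widetilde{k}(t,x,d\xi)$ on $\mathbb{R}^d \setminus \{0\}$. It satisfies the third identity of \eqref{mp_condexp} by construction.

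Second, I would apply the It\^o formula for semimartingales with jumps to $f(X_t)$ for $f \in C_c^2(\mathbb{R}^d)$, use the canonical decomposition, and take expectations. The continuous and compensated-jump martingales have zero expectation after a standard localization (using \eqref{mp_asm} and the fact that, for $f \in C_c^2$, the function $\phi(x,\xi) := (f(x+\xi) - f(x) - \nabla f(x) \cdot h(\xi))/(1 \wedge |\xi|^2)$ is bounded), yielding
\begin{equation*}
\mu_t(f) - \mu_0(f) = \mathbb{E}\biggl[\int_0^t \biggl(\beta_s \cdot \nabla f(X_{s-}) + \tfrac{1}{2}\mathrm{tr}(\alpha_s \nabla^2 f(X_{s-})) + \int_{\mathbb{R}^d} \phi(X_{s-},\xi)(1 \wedge |\xi|^2)\,\kappa_s(d\xi)\biggr)\,ds\biggr].
\end{equation*}
Conditioning the integrand on $X_{s-}$ and applying \eqref{mp_condexp} to the drift and diffusion terms, together with Remark~\ref{rem_lem2} (applied to $\widetilde{\kappa}$, $\widetilde{k}$, and the bounded test function $\phi$) to the jump term, replaces $(\beta_s,\alpha_s,\kappa_s)$ by $(b,a,k)(s,X_{s-})$. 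Using Proposition~\ref{Xt=Xt-} once more to identify $\mathrm{Law}(X_{s-}) = \mu_s$, the right-hand side becomes $\int_0^t \mu_s(\mathcal{L}_s f)\,ds$, so $(\mu_t)_{t \geq 0}$ is a weak solution of the non-local FPKE with operator $\mathcal{L}$.

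Third, under the growth condition \eqref{grow_cond}, I would invoke the superposition principle of \cite{MR4168386}, which converts the measure-valued weak solution $(\mu_t)$ to the FPKE into a probability $\widehat{\mathbb{P}} \in \mathcal{P}(\mathbb{D}(\mathbb{R}_+;\mathbb{R}^d))$ that is a martingale solution for $\mathcal{L}$ with initial law $\mu_0$ and one-dimensional marginals $\widehat{\mathbb{P}} \circ X_t^{-1} = \mu_t$. The canonical process under $\widehat{\mathbb{P}}$ is then the desired $\widehat{X}$.

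The main obstacle is the uniform treatment of the L\'evy jump kernel: Lemma~\ref{lem2} only yields a \emph{finite} kernel, so the $1 \wedge |\xi|^2$ reweighting is essential both for constructing $k$ and for pushing the jump integrand through the conditional expectation in step two via Remark~\ref{rem_lem2}. Boundedness of $\phi$ is what makes this reweighting work for $f \in C_c^2$. A secondary technical point is checking that \eqref{grow_cond} matches precisely the hypotheses required to invoke the non-local superposition principle of \cite{MR4168386}, which is what forces the second part of the theorem to be conditional on \eqref{grow_cond} and not only on \eqref{mp_asm}.
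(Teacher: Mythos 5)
Your proposal is correct and follows essentially the same route as the paper: Lemma~\ref{lem1} for $b,a$, Lemma~\ref{lem2} applied to the reweighted kernel $(1 \wedge |\xi|^2)\,\kappa_t(d\xi)$ for $k$, the FPKE obtained by taking expectations, conditioning on $X_{s-}$ via \eqref{mp_condexp} and Remark~\ref{rem_lem2}, identifying $\mathrm{Law}(X_{s-}) = \mu_s$ through Proposition~\ref{Xt=Xt-}, and then the superposition principle of \cite{MR4168386}. The one point where the paper is more careful than your sketch is the zero-expectation step: a bare localization does not suffice, and the paper instead proves that $M^f$ is a true martingale by estimating $\mathbb{E}[\langle M^f, M^f \rangle_t]$ through the bound $|f(x+\xi)-f(x)|^2 \leq C(1 \wedge |\xi|^2)$ and \eqref{mp_asm} --- precisely the estimate your ``standard localization'' implicitly requires.
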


Before proving Theorem~\ref{thm_mp}, we make a few remarks to give more insight into this theorem.

\begin{remark}
Consider the measure $\widetilde{\mu}$ defined as follows:
\begin{equation*}
	\widetilde{\mu}(A)
	\coloneqq \mathbb{E}\biggl[\int_0^\infty \bm{1}_A(s, X_s) \,ds \biggr]
	= \mathbb{E}\biggl[\int_0^\infty \bm{1}_A(s, X_{s-}) \,ds \biggr],\quad
	A \in \mathcal{B}(\mathbb{R}_+ \times \mathbb{R}^d).
\end{equation*}
Intuitively, we can think of $\widetilde{\mu}$ as the ``law'' of $(t, X_t(\omega))$ or $(t, X_{t-}(\omega))$ (though $\widetilde{\mu}$ is not a probability measure). One can easily check that the triplet $(b, a, k(\cdot, \cdot, d\xi))$, which satisfies \eqref{mp_condexp} for Lebesgue-a.e.\ $t \geq 0$, is unique up to a $\widetilde{\mu}$-null set. Moreover, the Markovian projection $\widehat{X}$ is a martingale solution for $\mathcal{L}$, regardless of which version of $(b, a, k)$ is used in \eqref{FPKO}. Indeed, for each $f \in C_c^2(\mathbb{R}^d)$, the function $(t, x) \mapsto \mathcal{L}_t f(x)$ is uniquely defined up to a $\widetilde{\mu}$-null set. We also note that by Fubini's theorem and the mimicking property, $\widetilde{\mu}$ can be written as
\begin{equation*}
	\widetilde{\mu}(A)
	= \widehat{\mathbb{E}}\biggl[\int_0^\infty \bm{1}_A(s, \widehat{X}_s) \,ds \biggr],\quad
	A \in \mathcal{B}(\mathbb{R}_+ \times \mathbb{R}^d),
\end{equation*}
where $\widehat{\mathbb{E}}$ is the expectation on the underlying probability space of $\widehat{X}$. It follows that different versions of $(b, a, k)$ lead to indistinguishable processes $\int_0^\cdot \mathcal{L}_s f(\widehat{X}_s) \,ds$. As a consequence of this observation, condition \eqref{grow_cond} can be weakened by replacing supremum with $\widetilde{\mu}$-essential supremum.
\end{remark}

\begin{remark}
In the theorem we take a truncation function $h(x) = x \bm{1}_{\{|x| \leq r\}}$ for some $r > 0$. Recall that $\beta$ depends on $r$, while $\alpha$, $\kappa$ do not. By \eqref{betah}, we see that the integrability condition \eqref{mp_asm} does not depend on the choice of $r$. However, the growth condition (\ref{grow_cond}) does depend on $r$. One can check that for $0 < r < \widetilde{r}$, if \eqref{grow_cond} holds for $r$, then it also holds for $\widetilde{r}$ (note that $b$ also depends on $r$). The converse is not true in general. In applications, we can pick any specific $r$ such that the assumptions of the theorem are satisfied.
\end{remark}

\begin{remark}
Under \eqref{mp_asm}, one sufficient condition on $X$ that automatically implies \eqref{grow_cond} with $\widetilde{\mu}$-essential supremum is the following: the process
\begin{equation*}
	\frac{|\beta_t|}{1 + |X_t|} + \frac{|\alpha_t|}{1 + |X_t|^2} + \int_{\mathbb{R}^d} \biggl(\bm{1}_{\{|\xi| < r\}} \frac{|\xi|^2}{1 + |X_t|^2} + \bm{1}_{\{|\xi| \geq r\}} \log\biggl(1 + \frac{|\xi|}{1 + |X_t|}\biggr)\biggr) \,\kappa_t(d\xi)
\end{equation*}
(or equivalently replacing $X$ with $X_-$) is bounded up to a $(\mathbb{P} \otimes dt)$-null set. The proof is simply by taking conditional expectations $\mathbb{E}[\cdot \,|\, X_{t-}]$.
\end{remark}

\begin{remark}\label{rmk_mp_cts}
In the case where $X$ is a continuous It\^o semimartingale, i.e.\ $\kappa = 0$, the growth condition \eqref{grow_cond} is not needed. This is exactly Corollary 3.7 (Process itself) in Brunick and Shreve \cite{MR3098443}. We will discuss the continuous case further at the end of this section.
\end{remark}

Now we prove our main theorem.

\begin{proof}[Proof of Theorem~\ref{thm_mp}]
The existence of $b$ and $a$ follows from \eqref{mp_asm} and Lemma~\ref{lem1}, noticing that $\mathbb{S}_+^d$ is a closed convex set in $\mathbb{R}^{d \times d}$. To get the existence of $k$, consider the transition kernel $\widetilde{\kappa}_t(d\xi) \coloneqq 1 \land |\xi|^2 \,\kappa_t(d\xi)$ from $\Omega \times \mathbb{R}_+$ to $\mathbb{R}^d$. \eqref{mp_asm} and Lemma~\ref{lem2} yield a finite transition kernel $\widetilde{k}$ from $\mathbb{R}_+ \times \mathbb{R}^d$ to $\mathbb{R}^d$ such that for Lebesgue-a.e.\ $t \geq 0$,
\begin{equation*}
	\widetilde{k}(t, X_{t-}, A)
	= \mathbb{E}[\widetilde{\kappa}_t(A) \,|\, X_{t-}],\quad
	\forall\, A \in \mathcal{B}(\mathbb{R}^d).
\end{equation*}
For $(t, x) \in \mathbb{R}_+ \times \mathbb{R}^d$, define $k(t, x, d\xi) \coloneqq (1 \land |\xi|^2)^{-1} \widetilde{k}(t, x, d\xi)$ on $\mathbb{R}^d \setminus \{0\}$ and $k(t, x, \{0\}) \coloneqq 0$. Then, $k$ is a L\'evy transition kernel from $\mathbb{R}_+ \times \mathbb{R}^d$ to $\mathbb{R}^d$ that satisfies \eqref{mp_condexp}. Moreover, Remark~\ref{rem_lem2} further tells us that for Lebesgue-a.e.\ $t \geq 0$,
\begin{equation}\label{mp_condexp2}
	\int_{\mathbb{R}^d} g(X_{t-}, \xi) \,k(t, X_{t-}, d\xi)
	= \mathbb{E}\biggl[\int_{\mathbb{R}^d} g(X_{t-}, \xi) \,\kappa_t(d\xi) \,\bigg|\, X_{t-}\biggr]
\end{equation}
holds for all measurable functions $g: \mathbb{R}^{2d} \to \mathbb{R}$ satisfying $|g(x, \xi)| \leq C(1 \land |\xi|^2)$, $\forall\, x, \xi \in \mathbb{R}^d$, for some constant $C > 0$.

Now we prove the second part of Theorem~\ref{thm_mp}. By \cite{MR1943877}, Theorem II.2.42, we know that for each $f \in C_c^2(\mathbb{R}^d)$, the process
\begin{equation*}
\begin{split}
	M_t^f \coloneqq f(X_t) - f(X_0) - \int_0^t \biggl(&\beta_s \cdot \nabla f(X_{s-}) + \frac{1}{2} \mathrm{tr}(\alpha_s \nabla^2 f(X_{s-}))\\ &+ \int_{\mathbb{R}^d} \bigl(f(X_{s-} + \xi) - f(X_{s-}) - \nabla f(X_{s-}) \cdot h(\xi)\bigr) \,\kappa_s(d\xi)\biggr) \,ds
\end{split}
\end{equation*}
is a local martingale. In particular, $M^f$ is locally bounded, thus locally square-integrable and $\langle M^f, M^f \rangle$ is well-defined. We claim that $M^f$ is a (true)  martingale. To show this, it suffices to check $\mathbb{E}[\langle M^f, M^f \rangle_t] < \infty$ for all $t \geq 0$. Let's first compute $[M^f, M^f]$. Note that $M^f - f(X) - f(X_0)$ is a continuous finite variation process, so we have $[M^f, M^f] = [f(X), f(X)]$. By It\^o's formula, the continuous local martingale part of $f(X)$ is given by $\sum_{i=1}^d \int_0^\cdot \partial_i f(X_{s-}) \,dX^{i, c}_s$. Then, it follows from \cite{MR1943877}, Theorem I.4.52 that
\begin{equation*}
\begin{split}
	[f(X), f(X)]_t
	&= \sum_{i=1}^d \sum_{j=1}^d \int_0^t \partial_i f(X_{s-}) \partial_j f(X_{s-}) \,d\langle X^{i, c}, X^{j, c} \rangle_s + \sum_{s \leq t} (f(X_s) - f(X_{s-}))^2\\
	&= \int_0^t \nabla f(X_{s-}) \cdot \alpha_s \nabla f(X_{s-}) \,ds + \int_0^t \int_{\mathbb{R}^d} (f(X_{s-} + \xi) - f(X_{s-}))^2 \,\mu^X(ds, d\xi).
\end{split}
\end{equation*}
Since $\langle M^f, M^f \rangle$ is the compensator of $[M^f, M^f] = [f(X), f(X)]$, we deduce that
\begin{equation*}
\begin{split}
	\langle M^f, M^f \rangle_t
	&= \int_0^t \biggl(\nabla f(X_{s-}) \cdot \alpha_s \nabla f(X_{s-}) + \int_{\mathbb{R}^d} (f(X_{s-} + \xi) - f(X_{s-}))^2 \,\kappa_s(d\xi)\biggr) \,ds\\
	&\leq C\int_0^t \biggl(|\alpha_s| + \int_{\mathbb{R}^d} 1 \land |\xi|^2 \,\kappa_s(d\xi)\biggr) \,ds,
\end{split}
\end{equation*}
where $C = C(\lVert f \rVert_\infty, \lVert \nabla f \rVert_\infty) > 0$ is some constant, and we used the fact that
\begin{equation*}
	|f(x + \xi) - f(x)|^2 \leq C (1 \land |\xi|^2),\quad \forall\, x, \xi \in \mathbb{R}^d.
\end{equation*}
Thus, by \eqref{mp_asm} we get $\mathbb{E}[\langle M^f, M^f \rangle_t] < \infty$ for all $t \geq 0$, which proves our claim that $M^f$ is a martingale.

From the martingale property established above, we have that $\mathbb{E}[M_t^f] = \mathbb{E}[M_0^f] = 0$ for each $t \geq 0$. This allows us to compute
\begin{equation}\label{FPKE_0}
\begin{split}
	&\mathbb{E}[f(X_t)] - \mathbb{E}[f(X_0)]\\
	&\quad\quad= \int_0^t \mathbb{E}\biggl[\beta_s \cdot \nabla f(X_{s-}) + \frac{1}{2} \mathrm{tr}(\alpha_s \nabla^2 f(X_{s-}))\\
	&\quad\quad\quad\quad\quad\,\,\,\,\,
	+ \int_{\mathbb{R}^d} \bigl(f(X_{s-} + \xi) - f(X_{s-}) - \nabla f(X_{s-}) \cdot h(\xi)\bigr) \,\kappa_s(d\xi)\biggr] \,ds\\
	&\quad\quad= \int_0^t \mathbb{E}\biggl[\mathbb{E}[\beta_s \,|\, X_{s-}] \cdot \nabla f(X_{s-}) + \frac{1}{2} \mathrm{tr}(\mathbb{E}[\alpha_s \,|\, X_{s-}] \nabla^2 f(X_{s-}))\\
	&\quad\quad\quad\quad\quad\,\,\,\,\,
	+ \mathbb{E}\biggl[\int_{\mathbb{R}^d} \bigl(f(X_{s-} + \xi) - f(X_{s-}) - \nabla f(X_{s-}) \cdot h(\xi)\bigr) \,\kappa_s(d\xi) \,\bigg|\, X_{s-}\biggr]\biggr] \,ds\\
	&\quad\quad= \int_0^t \mathbb{E}\biggl[b(s, X_{s-}) \cdot \nabla f(X_{s-}) + \frac{1}{2} \mathrm{tr}(a(s, X_{s-}) \nabla^2 f(X_{s-}))\\
	&\quad\quad\quad\quad\quad\,\,\,\,\,
	+ \int_{\mathbb{R}^d} \bigl(f(X_{s-} + \xi) - f(X_{s-}) - \nabla f(X_{s-}) \cdot h(\xi)\bigr) \,k(s, X_{s-}, d\xi)\biggr] \,ds\\
	&\quad\quad= \int_0^t \mathbb{E}[\mathcal{L}_s f(X_{s-})] \,ds,
\end{split}
\end{equation}
where in the first equality Fubini's theorem is justified because of \eqref{mp_asm} and the fact that
\begin{equation}\label{mp_bdd}
	|f(x + \xi) - f(x) - \nabla f(x) \cdot h(\xi)|
	\leq C(1 \land |\xi|^2),\quad \forall\, x, \xi \in \mathbb{R}^d,
\end{equation}
for some constant $C = C(\lVert f \rVert_\infty, \lVert \nabla^2 f \rVert_\infty) > 0$, and in the last but one equality we used \eqref{mp_condexp}, \eqref{mp_condexp2} and \eqref{mp_bdd} once more.

Let $\mu_t$ denote the law of $X_t$. Since $X$ is a c\`adl\`ag process, it is easy to see that the map $\mathbb{R}_+ \ni t \mapsto \mu_t \in \mathcal{P}(\mathbb{R}^d)$ is c\`adl\`ag and $\mu_{t-}$ is the law of $X_{t-}$. Moreover, by Proposition~\ref{Xt=Xt-}, for fixed $t \geq 0$ we have $\Delta X_t = 0$ $\mathbb{P}$-a.s., i.e.\ $X_t = X_{t-}$ $\mathbb{P}$-a.s. This implies that $\mu_t = \mu_{t-}$, and the map $\mathbb{R}_+ \ni t \mapsto \mu_t \in \mathcal{P}(\mathbb{R}^d)$ is actually continuous. Then, \eqref{FPKE_0} can be written as
\begin{equation}\label{FPKE}
	\mu_t(f) = \mu_0(f) + \int_0^t \mu_s(\mathcal{L}_s f) \,ds,\quad
	\forall\, t \geq 0,\, f \in C_c^2(\mathbb{R}^d).
\end{equation}
This shows that $(\mu_t)_{t \geq 0}$ is a weak solution to the non-local FPKE associated with $\mathcal{L}$ in the sense of \cite{MR4168386}, Definition 1.1. Together with the growth condition \eqref{grow_cond}, we are now in a position to apply \cite{MR4168386}, Theorem 1.5.\footnote{
	In the proof of the superposition principle in \cite{MR4168386}, the authors assumed without loss of generality that $r \leq 1/\sqrt{2}$. This is for simplicity in some upper bound estimates, without introducing complicated constants involving $r$. The result actually holds for all $r > 0$.
} We conclude that there exists a solution $\widehat{\mathbb{P}} \in \mathcal{P}(\mathbb{D}(\mathbb{R}_+; \mathbb{R}^d))$ to the martingale problem for $\mathcal{L}$ such that for each $t \geq 0$, the time-$t$ marginal of $\widehat{\mathbb{P}}$ agrees with $\mu_t$. Equivalently, there exists a martingale solution $\widehat{X}$ for $\mathcal{L}$ which mimics the one-dimensional marginal laws of $X$. This finishes the proof.
\end{proof}

As was mentioned in Remark~\ref{rmk_mp_cts}, when $X$ is a continuous It\^o semimartingale, Theorem~\ref{thm_mp} holds without assumption \eqref{grow_cond}. In this case, the setting of the theorem is much simplified: we have $\kappa = 0$, thus $k = 0$. We also do not need the truncation function $h$, so $\beta$ and $b$ have no dependency on $r$. The same type of proof still works here. Indeed, following a similar argument, one can derive the FPKE \eqref{FPKE}. Now $\mathcal{L}$ is a local FPK operator, so we refer to Trevisan \cite{MR3485364}, which implies that the superposition principle holds under the assumption:
\begin{equation*}
	\Gamma_t \coloneqq \int_0^t \int_{\mathbb{R}^d} \bigl(|b(s, x)| + |a(s, x)|\bigr) \,\mu_s(dx) \,ds < \infty,\quad
	\forall\, t \geq 0.
\end{equation*}
This is an immediate consequence of \eqref{mp_asm} and \eqref{mp_condexp}, once we rewrite $\Gamma_t$ in the following way:
\begin{equation*}
	\Gamma_t = \int_0^t \mathbb{E}\bigl[|b(s, X_s)| + |a(s, X_s)|\bigr] \,ds
	\leq \int_0^t \mathbb{E}\bigl[|\beta_s| + |\alpha_s|\bigr] \,ds
	< \infty.
\end{equation*}

For local FPK operators, the superposition principle holds under relatively mild integrability assumptions. However, in the non-local case, the literature is limited and there is no such result to the best of our knowledge. Some boundedness or growth conditions need to be imposed, for example as in \cite{MR4168386}. As of now, assumption \eqref{grow_cond} is needed for general discontinuous It\^o semimartingales. Removing or weakening this assumption is a possible direction of future work.

\section{Examples}\label{sec4}
In applications, Markovian projections usually appear in the inversion problem. More specifically, suppose we start with a relatively simple process $\widehat{X}$. Our goal is to construct a more complicated process $X$, while keeping the one-dimensional marginal laws unchanged. If we manage to find an $X$ such that $\widehat{X}$ is a Markovian projection of $X$, then the marginal law constraints are automatically satisfied. This is what we mean by ``inverting the Markovian projection''. In this section, we present three examples where our Markovian projection theorem can be applied.

\subsection{Local Stochastic Volatility (LSV) Model.}\label{Ex1}
One of the most famous applications of Markovian projections is the calibration of the LSV model in mathematical finance (see \cite{andersen2010interest}, Appendix A, \cite{MR3155635}, Chapter 11, and the references therein). Under the risk-neutral measure, the dynamics of the stock price is modeled via the following SDE (assuming constant interest rate $r$ and no dividend):
\begin{equation}\label{LSV0}
	dS_t = rS_t \,dt + \eta_t \sigma(t, S_t) S_t \,dB_t,
\end{equation}
where $\eta$ is the stochastic volatility, $\sigma$ is a function to be determined, and $B$ is a Brownian motion. Assume that $\eta$ is bounded from above and below by positive constants. One requires the LSV model to be perfectly calibrated to European call option prices (which depends on one-dimensional marginal laws). By the seminal work of Dupire \cite{dupire1994pricing}, we have perfect calibration to European calls in the local volatility (LV) model:
\begin{equation*}
	d\widehat{S}_t = r\widehat{S}_t \,dt + \sigma_{\text{Dup}}(t, \widehat{S}_t) \widehat{S}_t \,d\widehat{B}_t,\quad
	\sigma_{\text{Dup}}^2(t, K) \coloneqq \frac{\partial_t C(t, K) + rK \partial_K C(t, K)}{(1/2) K^2 \partial_{KK} C(t, K)},
\end{equation*}
where $\widehat{B}$ is a Brownian motion, $C(t, K)$ is the European call prices, and we assume that $\sigma_{\text{Dup}}$ is bounded. Thus, it suffices to have $\widehat{S}$ be a Markovian projection of $S$. One can choose
\begin{equation}\label{leverage}
	\sigma(t, x) \coloneqq \frac{\sigma_{\text{Dup}}(t, x)}{\sqrt{\mathbb{E}[\eta_t^2 \,|\, S_t = x]}},
\end{equation}
where the conditional expectation term is understood in the sense of Lemma~\ref{lem1}. Plugging \eqref{leverage} into \eqref{LSV0} yields the McKean--Vlasov type SDE
\begin{equation}\label{LSV}
	dS_t = rS_t \,dt + \frac{\eta_t}{\sqrt{\mathbb{E}[\eta_t^2 \,|\, S_t]}} \sigma_{\text{Dup}}(t, S_t) S_t \,dB_t.
\end{equation}
Suppose \eqref{LSV} admits a solution $S$ starting from $s_0 > 0$. The differential characteristics of $S$ are
\begin{equation*}
	\beta_t = rS_t,\quad
	\alpha_t = \frac{\eta_t^2}{\mathbb{E}[\eta_t^2 \,|\, S_t]} \sigma_{\text{Dup}}^2(t, S_t) S_t^2,\quad
	\kappa_t(d\xi) = 0.
\end{equation*}
By a standard Grönwall type argument, one can show that $S$ is bounded in $L^2$ on any finite time interval $[0, t]$. Thus, assumption \eqref{mp_asm} is satisfied. Taking conditional expectations $\mathbb{E}[\cdot \,|\, S_t]$, we get
\begin{equation*}
	b(t, x) = rx,\quad
	a(t, x) = \sigma_{\text{Dup}}^2(t, x) x^2,\quad
	k(t, x, d\xi) = 0.
\end{equation*}
It then follows from Theorem~\ref{thm_mp} that $\widehat{S}$ is indeed a Markovian projection of $S$.

However, the SDE \eqref{LSV} is notoriously hard to solve, and doing so still remains an open problem in full generality. Partial results exist when $\eta$ is of the form $f(Y)$. For instance, Abergel and Tachet \cite{MR2629564} proved short-time existence of solutions to the corresponding FPKE, with $Y$ being a multi-dimensional diffusion process. Jourdain and Zhou \cite{MR4086600} showed the weak existence when $Y$ is a finite-state jump process and $f$ satisfies a structural condition. Lacker, Shkolnikov and Zhang \cite{MR4152640} showed the strong existence and uniqueness of stationary solutions, when $\sigma_{\text{Dup}}$ does not depend on $t$ and $Y$ solves an independent time homogeneous SDE.

\subsection{Local Stochastic Intensity (LSI) Model.}\label{Ex2}
The LSI model (see \cite{MR3481308}) is a jump process analogue of the LSV model. It is often used in credit risk applications to model the number of defaults via a counting process $X$ whose intensity has the form $\eta_t \lambda(t, X_{t-})$, where $\eta$ is the stochastic intensity and $\lambda$ is a function to be determined. In other words, the process
\begin{equation*}
	X_t - \int_0^t \eta_s \lambda(s, X_{s-}) \,ds
\end{equation*}
is a (local) martingale. Similarly as in Example~\ref{Ex1}, we want the one-dimensional marginal laws of the LSI model to match those of the local intensity (LI) model, which can be perfectly calibrated to collateralized debt obligation (CDO) tranche prices (see \cite{schonbucher2005portfolio}). Note that in the LI model, defaults are modeled via a counting process $\widehat{X}$ whose intensity has the form $\lambda_{\text{Loc}}(t, \widehat{X}_{t-})$.

Assume that $\eta$ is bounded from above and below by positive constants, and $\lambda_{\text{Loc}}$ is bounded. One can choose
\begin{equation*}
	\lambda(t, x) = \frac{\lambda_{\text{Loc}}(t, x)}{\mathbb{E}[\eta_t \,|\, X_{t-} = x]},
\end{equation*}
which yields the McKean--Vlasov type martingale problem:
\begin{equation*}
	\biggl(X_t - \int_0^t \frac{\eta_s}{\mathbb{E}[\eta_s \,|\, X_{s-}]} \lambda_{\text{Loc}}(s, X_{s-}) \,ds\biggr)_{t \geq 0}
	\text{ is a martingale}.
\end{equation*}
The differential characteristics of $X$ are
\begin{equation*}
	\beta_t = 0,\quad
	\alpha_t = 0,\quad
	\kappa_t(d\xi) = \frac{\eta_s}{\mathbb{E}[\eta_s \,|\, X_{s-}]} \lambda_{\text{Loc}}(s, X_{s-}) \delta_1(d\xi),
\end{equation*}
where we used the truncation function $h(x) = x \bm{1}_{\{|x| \leq r\}}$ for $r < 1$. Taking conditional expectations $\mathbb{E}[\cdot \,|\, X_{t-}]$, we get
\begin{equation*}
	b(t, x) = 0,\quad
	a(t, x) = 0,\quad
	k(t, x, d\xi) = \lambda_{\text{Loc}}(t, x) \delta_1(d\xi).
\end{equation*}
Clearly, \eqref{mp_asm} and \eqref{grow_cond} are justified, so it follows from Theorem~\ref{thm_mp} that $\widehat{X}$ is a Markovian projection of $X$. When $\widehat{X}$ is a Poisson process (i.e.\ $\lambda_{\text{Loc}}$ is constant or a deterministic function of time $t$), we call $X$ a fake Poisson process.

Alfonsi, Labart and Lelong \cite{MR3481308} constructed solutions to the LSI model when $\eta_t = f(Y_t)$ for $Y$ either being a discrete state Markov chain or solving an SDE of the following type:
\begin{equation*}
	dY_t = b(t, X_{t-}, Y_{t-}) \,dt + \sigma(t, X_{t-}, Y_{t-}) \,dB_t + \gamma(t, X_{t-}, Y_{t-}) \,dX_t,
\end{equation*}
where $B$ is a Brownian motion. In recent work \cite{LL24}, we prove the existence of solutions to the LSI model under milder regularity conditions, while our $\eta$ is an exogenously given process not in the above feedback form involving $X$. We also extend the jump sizes of $X$ to follow any discrete law with finite first moment.

\subsection{Fake Hawkes Processes.}\label{Ex3}
A Hawkes process $\widehat{X}$ is a self-exciting counting process whose intensity is given by
\begin{equation*}
	\lambda_t = \lambda_0 + \int_0^{t-} K(t-s) \,d\widehat{X}_s
	= \lambda_0 + \sum_{i: \widehat{\tau}_i < t} K(t - \widehat{\tau}_i),
\end{equation*}
where $\lambda_0 > 0$ is the background intensity, $K \in L^1(\mathbb{R}_+; \mathbb{R}_+)$ is the excitation function and $\widehat{\tau}_1 < \widehat{\tau}_2 < \cdots$ are the jump times of $\widehat{X}$. In this example, we consider the most basic excitation function, namely the exponential $K(t) = ce^{-\theta t}$ for some $c, \theta > 0$.

We are interested in inverting the Markovian projection of $\widehat{X}$. However, we observe that the intensity of $\widehat{X}$ depends on the history of $\widehat{X}$. In other words, the differential characteristics of $\widehat{X}$ are not functions of time and the process itself. Therefore, we cannot expect $\widehat{X}$ to be a Markovian projection of some process. To tackle this problem, we lift $\widehat{X}$ to the pair $(\widehat{X}, \widehat{Y})$ by incorporating the right-continuous version of the intensity process, $\widehat{Y} = \lambda_+$, and our goal is to invert the Markovian projection of $(\widehat{X}, \widehat{Y})$.

The specific form of the excitation function allows us to derive the dynamics of $\widehat{Y}$ as
\begin{equation*}
	d\widehat{Y}_t = \theta (\lambda_0 - \widehat{Y}_{t-}) \,dt + c d\widehat{X}_t.
\end{equation*}
We see that the differential characteristics of $(\widehat{X}, \widehat{Y})$ are
\begin{equation*}
	\widehat{\beta}_t = \bigl(0, \theta (\lambda_0 - \widehat{Y}_{t-})\bigr),\quad
	\widehat{\alpha}_t = 0_{2 \times 2},\quad
	\widehat{\kappa}_t(d\xi_1, d\xi_2) = \widehat{Y}_{t-} \delta_{(1, c)}(d\xi_1, d\xi_2),
\end{equation*}
where we used the truncation function $h(x) = x \bm{1}_{\{|x| \leq r\}}$ for $r < \sqrt{1 + c^2}$ (the jump size of $(\widehat{X}, \widehat{Y})$). This inspires us to define $(X, Y)$ as follows: $X$ is a counting process with intensity
\begin{equation*}
	\frac{\eta_t}{\mathbb{E}[\eta_t \,|\, X_{t-}, Y_{t-}]} Y_{t-},
\end{equation*}
and $Y$ satisfies
\begin{equation*}
	Y_t = \lambda_0 + \int_0^t ce^{-\theta (t-s)} \,dX_s
	= \lambda_0 + \sum_{i: \tau_i \leq t} ce^{-\theta (t - \tau_i)},
\end{equation*}
where $\eta$ is some stochastic intensity bounded from above and below by positive constants, and $\tau_1 < \tau_2 < \cdots$ are the jump times of $X$. We can similarly write down the differential characteristics of $(X, Y)$ with the same truncation function:
\begin{equation*}
	\beta_t = \bigl(0, \theta (\lambda_0 - Y_{t-})\bigr),\quad
	\alpha_t = 0_{2 \times 2},\quad
	\kappa_t(d\xi_1, d\xi_2) = \frac{\eta_t}{\mathbb{E}[\eta_t \,|\, X_{t-}, Y_{t-}]} Y_{t-} \delta_{(1, c)}(d\xi_1, d\xi_2).
\end{equation*}
One can show that $(X, Y)$ is bounded in $L^1$ on any finite time interval $[0, t]$. Thus, \eqref{mp_asm} and \eqref{grow_cond} are justified, and Theorem~\ref{thm_mp} tells us that $(X, Y)$ has the same one-dimensional marginal laws as $(\widehat{X}, \widehat{Y})$. We call $(X, Y)$ a fake Hawkes process. In our recent work \cite{LL24}, we prove the existence of such fake Hawkes processes.

\bibliography{bibliography}
\bibliographystyle{abbrv}

\end{document}